\documentclass[10pt,twoside]{article}
 \usepackage{amssymb,latexsym,amsmath,amsthm}
 
 \usepackage{geometry}
\geometry{top=1.0in, bottom=1.0in, left=1.0in, right=1.0in}

\newtheorem{thm}{Theorem}

\newtheorem{lemma}{Lemma}

\newtheorem{cor}{Corollary}

\newtheorem{con}{Conjecture}

\begin{document}

\def\L{ {\mathcal{L}}}
\def\d{ \partial }
\def\Na{{\mathbb{N}}}

\def\Z{{\mathbb{Z}}} 

\def\IR{{\mathbb{R}}}

\def\CC{{ C_c^\infty(\Omega)}}

\def\XX{{  \left\{ \psi \in C^2(\overline{\Omega}) :  \psi =0 \quad \pOm \right\} }}

\def\pOm{ \partial \Omega}

\def\HA{ (-\Delta)^\frac{1}{2}}

\def\HAN{ (-\Delta)^{\frac{-1}{2}}}

\newcommand{\E}[0]{ \varepsilon}

\newcommand{\la}[0]{ \lambda}

\newcommand{\s}[0]{ \mathcal{S}}

\newcommand{\AO}[1]{\| #1 \| }

\newcommand{\BO}[2]{ \left( #1 , #2 \right) }

\newcommand{\CO}[2]{ \left\langle #1 , #2 \right\rangle}

\newcommand{\co}[0]{ \partial B_R} 

\newcommand{\m}[0]{ B_{R} \backslash B_{\frac{R}{2}}} 

\newcommand{\p}[0]{ B_{2R} \backslash B_{\frac{R}{4}}}

  \author{Craig Cowan\footnote{Funded by NSERC.}}

\title{A Liouville theorem for a fourth order H\'enon equation} 
\maketitle

{\footnotesize
 \centerline{Department of Mathematics, Stanford University}
 \centerline{Building 380, Stanford, California}
 \centerline{650-723-2221}
 \centerline{ctcowan@stanford.edu}
} 

\begin{abstract}  We examine the following fourth order H\'enon  equation 
\begin{equation} \label{pipe}
\Delta^2 u = |x|^\alpha u^p  \qquad \text{in}\    \IR^N, 
\end{equation} where $ 0 < \alpha$.         Define the Hardy-Sobolev exponent  $ p_4(\alpha ):= \frac{N+4 + 2 \alpha}{N-4}$.  We show that in dimension $ N=5$ there are no positive bounded classical solutions of (\ref{pipe}) provided $ 1 < p < p_4(\alpha)$. 

 \end{abstract}

\section{Introduction and main results} 

In this, the preliminary version of the paper,  we are interested in the following fourth order problem
\begin{equation} \label{hen}
 \Delta^2 u = |x|^\alpha u^p \qquad \text{in}\    \IR^N,
 \end{equation}    where $ p>1$ and $ \alpha >0$.   
Our interest  is in the Liouville property  (ie.  the nonexistence of positive solutions).  We begin by recalling the known results for the second order analog of (\ref{hen}), 
\begin{equation} \label{second} 
-\Delta u = |x|^\alpha u^p \qquad \text{in}\    \IR^N.  
\end{equation}  
 The case where $ \alpha=0$ has been very widely studied, see \cite{Caf}, \cite{chen}, \cite{gidas},\cite{Gidas}. It is known that there are no positive classical  solutions of (\ref{second}) under various decay assumptions as $ |x| \rightarrow \infty$  provided that   
\[ 1<  p< \frac{N+2}{N-2},\] and in the case of $ N =2$ there is no solution for any $ p>1$.   We further remark that this is an optimal result.

  We now give a brief background on the case where $ \alpha $ is nonzero,   for more details see \cite{phan}. 
 We define the Hardy-Sobolev exponent 
\[ p_2(\alpha ):= \frac{N+2 + 2 \alpha}{N-2}, \]  where $  N \ge 3$.    It is known, see \cite{gidas},  that if $ \alpha <-2$ then there are no positive solutions to (\ref{second})  on any domain containing the origin.  Hence we restrict our attention to the case where $ \alpha >-2$.      
The case of radial solutions is completely understood, see \cite{gidas} and \cite{veron},  where they show there exists a positive  classical   radial solution  of (\ref{second}) if and only if  $p \ge p_2(\alpha)$.    
This result suggests the following: 
\begin{con} \label{pppp}  Suppose that $  \alpha >-2$. If $ 1 <p < p_2(\alpha)$ then (\ref{second}) has no classical bounded solution.
\end{con}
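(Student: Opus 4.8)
\medskip
\noindent\textbf{Proof proposal.} The plan is to argue by contradiction and, whenever possible, to reduce to the completely understood radial theory, falling back on integral estimates in the range where that reduction fails. So suppose $u>0$ is a bounded classical solution of (\ref{second}) with $1<p<p_2(\alpha)$.

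\emph{Step 1: decay and a potential representation.} Since $-\Delta u=|x|^\alpha u^p\ge 0$, $u$ is positive and superharmonic on $\IR^N$ (here $N\ge 3$), so the Riesz decomposition gives $u=c_N|x|^{2-N}*(|x|^\alpha u^p)+h$ with $h\ge 0$ harmonic; boundedness of $u$ forces $h$ to be a nonnegative bounded harmonic function, hence a constant, which must be $0$ because $\int_{\{|y|>1\}}|y|^{2-N+\alpha}\,dy=+\infty$ when $\alpha>-2$. Hence $u(x)\to 0$,
\[
u(x)=c_N\int_{\IR^N}|x-y|^{2-N}|y|^\alpha u(y)^p\,dy,\qquad \int_{\{|y|>1\}}|y|^{2-N+\alpha}u(y)^p\,dy<\infty,
\]
and restricting the potential to a fixed annulus (where $u$ is bounded below by positivity and Harnack) yields $u(x)\ge c_0|x|^{2-N}$ for large $|x|$. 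A quick bootstrap on this lower bound already disposes of the subrange $1<p<\frac{N+\alpha}{N-2}$: feeding $u\ge c|x|^{-\gamma}$ back into the equation and taking Newtonian potentials replaces $\gamma$ by $p\gamma-\alpha-2$, and iterating from $\gamma=N-2$ the exponents run off to $-\infty$, eventually producing a forcing term $|x|^{\alpha-p\gamma}$ with $\alpha-p\gamma>-2$, whose Newtonian potential is infinite --- so $u$ could not be bounded.

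\emph{Step 2: reduction to the radial case for $1<p\le\frac{N+2+\alpha}{N-2}$.} Set $v(x):=|x|^{2-N}u(x/|x|^2)$. A direct computation shows $v$ is a positive classical solution on $\IR^N\setminus\{0\}$ of
\[
-\Delta v=|x|^{\beta}v^p,\qquad \beta:=p(N-2)-\alpha-(N+2),
\]
with $v(x)\le C|x|^{2-N}$ both as $|x|\to 0$ (since $u$ is bounded) and as $|x|\to\infty$ (since $u(0)$ is finite). In the stated range $\beta\le 0$, so $|x|^{\beta}$ is radially nonincreasing, which is exactly the sign the method of moving planes needs: I would move hyperplanes not through the origin in every direction, in the spirit of Caffarelli--Gidas--Spruck, using that the singularity of $v$ at $0$ is reflected to the moving half-space and aids the comparison, that the $|x|^{2-N}$ decay at infinity starts the sweep, and that the weight $|x|^\beta$ pins the symmetry centre at $0$. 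This makes $v$, hence $u$, radial about the origin; but then $u$ is a positive bounded radial classical solution of (\ref{second}), contradicting the fact (\cite{gidas},\cite{veron}) that such a solution exists only for $p\ge p_2(\alpha)$.

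\emph{Step 3: the remaining range $\frac{N+2+\alpha}{N-2}<p<p_2(\alpha)$.} Here the Kelvin transform gives $\beta>0$, so $v$ solves a H\'enon equation that is \emph{supercritical} for the exponent $p$ and the radial reduction breaks down; instead I would run the Gidas--Spruck / Bidaut-V\'eron--V\'eron integral scheme directly on $u$. Starting from Bochner's identity $\tfrac12\Delta|\nabla u|^2=|D^2u|^2-\nabla u\cdot\nabla(|x|^\alpha u^p)$ and $|D^2u|^2\ge\frac1N(\Delta u)^2$, I would test against $u^{-s}$ times a rescaled cutoff $\varphi_R$, integrate by parts using the equation, and assemble weighted integral inequalities relating $\int|x|^\alpha u^{p+1}$ and $\int|x|^\alpha u^{p-1}|\nabla u|^2$; boundedness of $u$ and the decay from Step 1 annihilate all cutoff terms as $R\to\infty$, and for $p<p_2(\alpha)$ the leading coefficient has the sign that forces $\int_{\IR^N}|x|^\alpha u^{p+1}=0$, a contradiction. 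A less efficient alternative is the Rellich--Pohozaev identity on balls,
\[
\Big(\frac{N+\alpha}{p+1}-\frac{N-2}{2}\Big)\int_{B_R}|x|^\alpha u^{p+1}\,dx=\int_{\partial B_R}(\cdots)\,dS,
\]
whose left coefficient is positive exactly when $p<p_2(\alpha)$; one then needs decay such as $u(x)=o(|x|^{-(N-2)/2})$, $|\nabla u(x)|=o(|x|^{-N/2})$ to kill the boundary integral along a sequence $R_j\to\infty$.

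\emph{The main obstacle} is Step 3: adapting the Bochner/Gidas--Spruck estimates to the singular weight $|x|^\alpha$ --- choosing $s$ and tracking every constant so that the good-sign condition for the leading term is \emph{precisely} $p<p_2(\alpha)$, and controlling both the extra term $\alpha|x|^{\alpha-2}(x\cdot\nabla u)u^p$ produced by differentiating the weight and the behaviour near the origin. If one instead follows the Pohozaev route, the obstacle becomes proving those pointwise decay bounds, which for $\alpha>0$ do not follow from boundedness alone and would require either a finer study of the potential representation of Step 1 or a supplementary moving-plane argument.
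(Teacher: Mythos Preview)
The statement you are trying to prove is presented in the paper as a \emph{conjecture}, not a theorem; the paper does not prove it and does not claim to. It only records the known partial results: the case $\alpha\le 0$ follows from Theorem~\ref{sou} (Gidas--Spruck), and the case $N=3$, $\alpha>0$ is Theorem~\ref{pha} (Phan--Souplet). For $\alpha>0$ and $N\ge 4$ the conjecture is open --- indeed the paper explicitly says that the Phan--Souplet method ``does allow them to obtain new, but non optimal, non existence results in higher dimensions'', and the entire point of the present paper is to transport that method to the fourth-order problem, where again only the bottom dimension $N=5$ is reached.

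Accordingly your proposal cannot be a complete proof, and the gaps sit exactly where the problem is hard. In Step~2 you apply moving planes to the Kelvin transform $v$, which carries a genuine non-removable singularity $v(x)\sim u(0)\,|x|^{2-N}$ at the origin together with the weight $|x|^{\beta}$. The Caffarelli--Gidas--Spruck argument you invoke handles isolated singularities only for the unweighted \emph{critical} equation; making it work for a subcritical exponent with a singular weight and then ``pinning the centre at $0$'' is not standard, and if it really delivered radial symmetry for all $p\le (N+2+\alpha)/(N-2)$ it would already improve Theorem~\ref{sou} in every dimension --- a result not in the literature. Step~3 is where the conjecture actually lives: the Bochner/Gidas--Spruck integral scheme with weight $|x|^{\alpha}$ has been pushed by several authors and does \emph{not} close up to $p_2(\alpha)$ when $\alpha>0$ (the extra term $\alpha|x|^{\alpha-2}(x\cdot\nabla u)u^{p}$ spoils the sign count before the critical exponent is reached), and the Pohozaev alternative needs the pointwise decay $u=o(|x|^{-(N-2)/2})$, $|\nabla u|=o(|x|^{-N/2})$, which is precisely what one cannot extract from boundedness alone. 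You identify these as ``the main obstacle''; they are, and they remain unresolved.
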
   
Note that $ p_2(\alpha) \le \frac{N+2}{N-2}$  exactly when $ \alpha \le 0$.   Also note that the term $ |x|^\alpha$ changes monotonicity when $ \alpha$ changes sign.  For these reasons the methods available to prove this conjecture greatly depend on the sign of $ \alpha$.       
Until recently the best known results concerning (\ref{second}), apart from the radial case, were 
\begin{thm} \label{sou} Let $ \alpha >-2$ and $ p>1$.  

\begin{enumerate} \item If $ p < \min\{ p_2(0), p_2(\alpha) \}$ then there are no positive sufficiently regular solutions of (\ref{second}).  
\item  If $ p < \frac{N+\alpha}{N-2}$ then there are no positive weak supersolutions of (\ref{second}). 

\end{enumerate} 

\end{thm}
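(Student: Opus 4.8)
\noindent The two statements are proved by rather different means, and I would treat them separately. For (2) the plan is to use the rescaled test-function (``nonlinear capacity'') method of Mitidieri--Pohozaev. Let $u>0$ be a weak supersolution of (\ref{second}); for $R>1$ put $\varphi_R(x)=\psi(|x|/R)^m$, where $\psi\in C_c^\infty([0,2))$ equals $1$ on $[0,1]$ and $m$ is a large exponent to be chosen. Testing $-\Delta u\ge|x|^\alpha u^p$ against $\varphi_R\ge0$ and integrating by parts twice moves both derivatives onto the cutoff, giving $\int_{\IR^N}|x|^\alpha u^p\varphi_R\le\int_{\IR^N}u\,|\Delta\varphi_R|$. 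Since $\Delta\varphi_R$ is supported in the annulus $A_R=\{R\le|x|\le 2R\}$, H\"older's inequality with exponents $p,p'=\frac{p}{p-1}$ and the weight $|x|^\alpha$, followed by Young's inequality, absorbs an $\varepsilon$-fraction of the left-hand side and leaves the remainder
\[
C_\varepsilon\int_{A_R}|x|^{-\frac{\alpha}{p-1}}\,|\Delta\varphi_R|^{p'}\,\varphi_R^{1-p'},
\]
which is finite since taking $m$ large keeps $|\Delta\varphi_R|^{p'}\varphi_R^{1-p'}$ bounded. On $A_R$ one has $|\Delta\varphi_R|\lesssim R^{-2}$ and $|x|^{-\alpha/(p-1)}\lesssim R^{-\alpha/(p-1)}$, so the remainder is $O\!\big(R^{\,N-2p'-\alpha/(p-1)}\big)$; multiplying the exponent by $p-1$ turns it into $p(N-2)-(N+\alpha)$, which is negative precisely when $p<\frac{N+\alpha}{N-2}$. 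Letting $R\to\infty$ then forces $\int_{\IR^N}|x|^\alpha u^p=0$, so $u\equiv0$, contradicting positivity. (If $\int|x|^\alpha u^p$ is not known to be finite a priori, one runs the same computation with $u$ replaced by $\min(u,k)$ and lets $k\to\infty$ at the end.)

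For (1) the plan is to combine a Liouville theorem for the limiting unweighted equation with a Rellich--Pohozaev identity. The first, ``decay'', step: assuming for contradiction that a positive solution $u$ of (\ref{second}) exists, a doubling/blow-up argument centered at points $x_k$ with $|x_k|\to\infty$ --- on which scale the coefficient $|x|^\alpha$ is essentially the constant $|x_k|^\alpha$ --- produces in the limit a positive entire solution of $-\Delta v=v^p$ on $\IR^N$; this is excluded for $p<p_2(0)$ by the Gidas--Spruck theorem (\cite{Gidas}), from which one extracts a quantitative decay rate for $u$ and, by interior elliptic estimates, for $\nabla u$. The second step: the energy identity (multiply (\ref{second}) by $u$) and the Rellich--Pohozaev identity (multiply by $x\cdot\nabla u$) on a ball $B_R$, after eliminating $\int_{B_R}|\nabla u|^2$ between them, yield
\[
\Big(\frac{N+\alpha}{p+1}-\frac{N-2}{2}\Big)\int_{B_R}|x|^\alpha u^{p+1}=\ \text{(boundary integrals over }\partial B_R\text{)},
\]
and the prefactor is strictly positive exactly when $p<p_2(\alpha)$. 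The decay from the first step is used to send the boundary integrals to $0$ along a suitable sequence $R_k\to\infty$, whence $\int_{\IR^N}|x|^\alpha u^{p+1}=0$ and $u\equiv0$.

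Part (2) is essentially mechanical once the weighted H\"older/Young step is set up. The difficulty is concentrated in (1): one must make the blow-up step quantitative enough that the decay rates of $u$ and $\nabla u$ kill \emph{all} the boundary terms of the Pohozaev identity --- including the cross term $\int_{\partial B_R}u\,\partial_\nu u$ and $R\int_{\partial B_R}|x|^\alpha u^{p+1}$, not merely $\int_{\partial B_R}|\nabla u|^2$ --- along a sequence of radii, and one must rule out that the blow-up concentrates at the origin (so that the limit equation is the unweighted one). It is exactly these points --- the only available Liouville input being the unweighted Gidas--Spruck theorem, together with the degeneracy of $|x|^\alpha$ at the origin when $\alpha>0$ and its (integrable, since $\alpha>-2$) singularity there when $-2<\alpha<0$ --- that force the extra restriction $p<p_2(0)$ in (1) on top of $p<p_2(\alpha)$.
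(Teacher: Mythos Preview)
The paper does not give its own proof of this theorem: it is quoted as background, with part (1) attributed to \cite{gidas} and \cite{veron} and part (2) to \cite{miti}. So there is no in-paper argument to compare against, only the cited sources.

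Your treatment of (2) is exactly the rescaled test-function (``nonlinear capacity'') method of Mitidieri--Pohozaev \cite{miti} that the paper invokes, and the computation is correct; the exponent bookkeeping you do indeed gives the threshold $p<\frac{N+\alpha}{N-2}$.

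For (1), your route --- doubling/blow-up at infinity to extract the pointwise decay $u(x)\lesssim|x|^{-(2+\alpha)/(p-1)}$ from the unweighted Gidas--Spruck Liouville theorem (this is where $p<p_2(0)$ enters), followed by a Rellich--Pohozaev identity whose boundary terms vanish precisely when $p<p_2(\alpha)$ --- is a valid and by now standard alternative to the original arguments in \cite{gidas} (integral identities) and \cite{veron} (a dynamical-systems analysis). It is closer in spirit to the re-proof in Phan--Souplet \cite{phan}, which the paper explicitly mentions; note however that \cite{phan} obtain the decay not by pointwise blow-up but by integral estimates combined with Sobolev embeddings on spheres, the same machinery used in the main theorem of the present paper. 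Either route needs both hypotheses $p<p_2(0)$ and $p<p_2(\alpha)$ for exactly the reasons you identify, which is consistent with the statement. The one point to handle carefully in your sketch is the behavior of the blow-up if the doubling centers stay bounded (in particular near the origin when $\alpha\neq0$): for a classical solution this is excluded by local boundedness of $u$, so the centers must escape to infinity and your limiting equation is indeed the unweighted one.
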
  
The first part of this theorem is from \cite{gidas} and \cite{veron}.  Note that this implies that the Conjecture \ref{pppp} holds in the case of negative $ \alpha$.    The second part is from \cite{miti} and is an optimal result after considering  $u(x)= C |x|^\frac{-\alpha-2}{p-1}$ for some positive $ C$.  

We now come to the method which we will extend to (\ref{hen}).   
In \cite{souplet}  the Lane-Emden conjecture, which is related to the elliptic system 
\[ -\Delta u = v^p, \quad -\Delta v = u^q \qquad \IR^N, \] was shown to be true in dimension $ N=4$.  
 Later this method  was extended  in \cite{phan}  to show: 
\begin{thm} \label{pha} Suppose that $ N=3$ and $ \alpha >0$.  If $ 1 <p <p_2(\alpha)$  then there is no positive bounded classical solution of (\ref{second}). 
\end{thm}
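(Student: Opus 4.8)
The plan is to argue by contradiction. Suppose $u>0$ is a bounded classical solution of (\ref{second}) in $\IR^N$ with $N=3$, and set $M=\|u\|_\infty<\infty$. A positive classical solution is in particular a positive supersolution, so by Theorem \ref{sou}(2) we may assume $p\ge\frac{N+\alpha}{N-2}$. Write $\gamma^\ast:=\frac{\alpha+2}{p-1}$ for the scaling-critical decay rate of (\ref{second}), i.e. the rate preserved by the rescaling $u(x)\mapsto R^{\gamma^\ast}u(Rx)$. The heart of the proof is to show that $u$ must have \emph{finite energy}, namely $\int_{\IR^N}|x|^\alpha u^{p+1}<\infty$ and $\int_{\IR^N}|\nabla u|^2<\infty$, together with a decay estimate $u(x)\le C|x|^{-\gamma}$ for $|x|$ large with $\gamma>\frac{N+\alpha}{p+1}$. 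Granting this, the conclusion follows from a Rellich--Pohozaev identity on large balls, in the spirit of \cite{souplet} and \cite{phan}.

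For the Pohozaev step, multiplying (\ref{second}) by $u$ and integrating over $B_R$ gives the energy identity $\int_{B_R}|\nabla u|^2-\int_{\partial B_R}u\,\partial_\nu u=\int_{B_R}|x|^\alpha u^{p+1}$, while multiplying by $x\cdot\nabla u$ and integrating by parts gives the Pohozaev identity, whose interior term carries the coefficient $\frac{N+\alpha}{p+1}$ in front of $\int_{B_R}|x|^\alpha u^{p+1}$, the extra $\alpha$ arising from $x\cdot\nabla_x\big(|x|^\alpha\big)=\alpha|x|^\alpha$. Eliminating $\int_{B_R}|\nabla u|^2$ between the two identities yields
\[
\Big(\frac{N+\alpha}{p+1}-\frac{N-2}{2}\Big)\int_{B_R}|x|^\alpha u^{p+1}
= R\!\int_{\partial B_R}(\partial_\nu u)^2
-\Big(1-\frac N2\Big)\!\int_{\partial B_R}u\,\partial_\nu u
-\frac R2\!\int_{\partial B_R}|\nabla u|^2
+\frac{R}{p+1}\!\int_{\partial B_R}|x|^\alpha u^{p+1},
\]
and the constant $C_0:=\frac{N+\alpha}{p+1}-\frac{N-2}{2}$ is strictly positive precisely because $p<p_2(\alpha)$, i.e. $p+1<\frac{2(N+\alpha)}{N-2}$. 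If $u$ has finite energy then $\int_1^\infty\big(\int_{\partial B_R}(|x|^\alpha u^{p+1}+|\nabla u|^2)\big)\,dR<\infty$, so there is a sequence $R_j\to\infty$ along which $R_j\int_{\partial B_{R_j}}(|x|^\alpha u^{p+1}+|\nabla u|^2)\to0$, and then the first, third and fourth boundary terms above tend to $0$ along $R_j$. The remaining term satisfies $\big|\int_{\partial B_R}u\,\partial_\nu u\big|\le\big(\int_{\partial B_R}u^2\big)^{1/2}\big(\int_{\partial B_R}|\nabla u|^2\big)^{1/2}\le CR^{(N-1)/2-\gamma}\big(\int_{\partial B_R}|\nabla u|^2\big)^{1/2}=o\big(R^{(N-2)/2-\gamma}\big)$ along $R_j$, which tends to $0$ since $\gamma>\frac{N+\alpha}{p+1}>\frac{N-2}{2}$ in the subcritical range (for $N=3$ the threshold is merely $\gamma>\tfrac12$). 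Letting $R=R_j\to\infty$ in the identity forces $C_0\int_{\IR^N}|x|^\alpha u^{p+1}=0$, hence $u\equiv0$, contradicting $u>0$.

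The hard part, and the step I expect to be the main obstacle, is establishing the finite-energy and decay properties; here I would follow the feedback (bootstrap) scheme of \cite{souplet}, adapted to the Hardy--H\'enon weight as in \cite{phan}. Testing (\ref{second}) against $\xi_R^{2m}$, with $\xi_R(x)=\xi(x/R)$ a fixed cutoff equal to $1$ on $B_R$, and using $u\le M$, gives the crude starting bound
\[
\int_{B_R}|x|^\alpha u^{p+1}\ \le\ M\int_{B_R}|x|^\alpha u^{p}\ \le\ M\int_{\IR^N}|x|^\alpha u^{p}\,\xi_R^{2m}\ =\ M\int_{\IR^N}u\,(-\Delta\xi_R^{2m})\ \le\ CR^{N-2}.
\]
One then alternates two operations. \emph{Integral to pointwise}: for $|x_0|=R$ large the weight satisfies $|x|^\alpha\simeq R^\alpha$ on $B_{R/4}(x_0)$, so there $u$ solves a uniformly elliptic equation with right-hand side comparable to $R^\alpha u^p$; representing $u$ through the Poisson kernel and Green's function of $B_{R/4}(x_0)$, or invoking interior $L^\infty$ (De Giorgi--Nash--Moser) estimates, converts an integral-decay bound into a pointwise bound $u(x)\le C|x|^{-\gamma_k}$ on $\{|x|\ge1\}$, the Poisson (boundary) contribution being controlled by the decay already known. \emph{Pointwise to integral}: reinserting $u(x)\le C|x|^{-\gamma_k}$ into $-\Delta u=|x|^\alpha u^p$ and summing over dyadic annuli improves the exponent in the integral estimate, which through the first operation improves $\gamma_k$. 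The subcriticality $p<p_2(\alpha)$ is exactly the condition under which $\gamma^\ast=\frac{\alpha+2}{p-1}$ satisfies $\gamma^\ast>\frac{N+\alpha}{p+1}$, so that pushing $\gamma_k$ close enough to $\gamma^\ast$ delivers the decay required in the Pohozaev step and, after summing over annuli, gives $\int_{\IR^N}|x|^\alpha u^{p+1}<\infty$; and $N=3$ is precisely the dimension in which this iteration is self-improving rather than stalling --- the analogue of the role of $N=4$ in Souplet's proof of the Lane--Emden conjecture and of $N=5$ in the fourth-order problem of the present paper. Once the decay is in hand, testing against $u\,\xi_R^{2m}$ with a now-convergent right-hand side gives $\int_{\IR^N}|\nabla u|^2<\infty$, interior gradient estimates propagate the decay to $\nabla u$, and feeding all of this into the Pohozaev identity above completes the proof.
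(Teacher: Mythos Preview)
The paper does not itself prove Theorem~\ref{pha}; it quotes it from \cite{phan} as background. However, the paper implements the identical Phan--Souplet method in full detail for the fourth-order analog (Theorem~\ref{Main}), so one can read off exactly what the second-order argument looks like, and it differs substantially from what you outline.

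The Phan--Souplet argument never establishes pointwise decay of $u$. Its skeleton is: (i) a Pohozaev identity on $B_R$ bounds $\int_{B_R}|x|^\alpha u^{p+1}$ by boundary integrals over $\partial B_R$ (cf.\ Lemma~\ref{poho}, Corollary~\ref{poh}); (ii) the rescaled test-function estimate of Mitidieri--Pohozhaev together with elliptic regularity and interpolation on annuli gives volume bounds $\int_{B_R\setminus B_{R/2}}|D^j u|\le CR^{N-\beta_j}$ (cf.\ Lemma~\ref{est}, Corollary~\ref{main_est}); (iii) a measure-theoretic selection turns these into sphere bounds $\int_{S^{N-1}}|D^j u(R_m,\theta)|\,d\theta\le CR_m^{-\beta_j}$ along a sequence $R_m\to\infty$ (cf.\ Lemma~\ref{decay}); (iv) --- and this is Souplet's key idea --- Sobolev embeddings \emph{on the $(N-1)$-sphere} are used to control each Pohozaev boundary term. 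The gain comes from working on the lower-dimensional object $S^{N-1}$; the role of $N=3$ (resp.\ $N=5$ in this paper) is precisely that $S^{N-1}$ is small enough for these embeddings to close. So although you cite \cite{souplet} and \cite{phan}, the scheme you describe is not their method.

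Your proposal replaces (iii)--(iv) by a pointwise bootstrap $u(x)\le C|x|^{-\gamma_k}$. Besides being a different route, there is a genuine gap: the iteration does not get off the ground. From boundedness alone ($\gamma_0=0$), in your ``integral to pointwise'' step the Poisson contribution on $B_{R/4}(x_0)$ is controlled only by $\sup u=M$, hence is $O(1)$ and carries no decay; you say it is ``controlled by the decay already known,'' but with $\gamma_0=0$ that gives nothing, so $\gamma_1=0$ and the scheme stalls immediately. Relatedly, the natural iteration map $\gamma\mapsto p\gamma-\alpha-2$ is self-improving only for $\gamma>\gamma^\ast=\frac{\alpha+2}{p-1}$, whereas you start below it. The Phan--Souplet machinery sidesteps this entirely by never leaving integral quantities, and the dimensional restriction $N=3$ enters through the Sobolev exponents on $S^2$, not through convergence of a pointwise decay iteration.
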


 In fact they prove more. They show there is no positive classical solution that satisfies certain growth conditions.      This method does allow them to obtain new, but non optimal, non existence results in higher dimensions.   We also mention  that they give a new proof of Theorem \ref{sou}, 1.   
 
It is precisely Theorem \ref{pha} which we will extend to (\ref{hen}).   We remark that one could use the methods from \cite{phan} to prove results concerning (\ref{hen}) for negative $ \alpha$  but we choose not to do this.  
Before stating our result
we briefly recall the results for (\ref{hen}). 
The most studied case is when $ \alpha=0$.   In this case the  positive classical bounded solutions of (\ref{hen}) are classified and there are no positive bounded solutions provided  
\[ 1 < p < \frac{ N+4}{N-4},\]  see \cite{Lin}, \cite{Xu}.   For other results concerning (\ref{hen}) we direct the reader to \cite{Gaz},  \cite{shi}.     Using the methods from \cite{miti} one can show there are no positive weak supersolutions of (\ref{hen}) provided that 
\[ 1 < p < \frac{N+\alpha}{N-4},\]   see the comment after Lemma \ref{est}.

 In various applications one is not interested in any solution of (\ref{hen}) but rather one with added properties.    In \cite{fazly} it was shown that there are no positive finite Morse index solutions of (\ref{hen}) provided 
\[ 1 < p < p_4(\alpha):=\frac{N+4 + 2 \alpha}{N-4}.\]     We now come to our result. 

\begin{thm} \label{Main} Suppose that $ 0 < \alpha$.    Then there is no positive bounded classical solution of (\ref{hen}) provided $ N=5$ and 
\[ 1 < p < p_4(\alpha)=9+2 \alpha. \] 

\end{thm}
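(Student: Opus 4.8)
\medskip\noindent\textbf{Outline of the proof.} The plan is to adapt the method introduced by Souplet in \cite{souplet} for the Lane--Emden conjecture in dimension four, and extended by Phan--Souplet in \cite{phan} (Theorem~\ref{pha}), after first rewriting \eqref{hen} as a cooperative second order system. Suppose, for contradiction, that $u$ is a positive bounded classical solution of \eqref{hen} with $N=5$ and $1<p<p_4(\alpha)=9+2\alpha$, and put $v:=-\Delta u$; then $(u,v)$ solves
\[
 -\Delta u = v, \qquad -\Delta v = |x|^\alpha u^p \qquad \text{in } \IR^5 .
\]
The first, and essential, use of the boundedness of $u$ is to show that $v>0$ in $\IR^5$. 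To get $v\ge 0$, fix $x_0\in\IR^5$ and let $U(r)$, $H(r)$, $G(r)$ be the spherical averages over $\partial B_r(x_0)$ of $u$, $\Delta u$ and $\Delta^2 u=|x|^\alpha u^p\ge 0$; then $(r^{4}H')'=r^{4}G\ge 0$, and since $r^{4}H'\to 0$ as $r\to 0$ this forces $H'\ge 0$, so $H$ is nondecreasing. If $H(r_1)>0$ for some $r_1>0$, then $(r^{4}U')'=r^{4}H\ge r^{4}H(r_1)$ for $r\ge r_1$, which upon integrating twice gives $U(r)\to\infty$, contradicting $u\in L^\infty$; hence $H\le 0$ throughout, and letting $r\to0^+$ gives $\Delta u(x_0)=H(0^+)\le 0$. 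As $x_0$ was arbitrary, $v\ge 0$, and then the strong maximum principle for $-\Delta v=|x|^\alpha u^p\ge 0$ (with $v\not\equiv 0$) yields $v>0$. We thus have a cooperative system with both components positive, so the maximum principle, the mean value inequalities for sub/superharmonic functions (and the Harnack inequality for the system), and Souplet's feedback scheme are all at our disposal; the goal is to contradict the positivity of $u$.

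Next I would set up the rescaled test-function estimates. Let $\xi=\xi_R\in C_c^\infty(B_{2R})$ with $\xi\equiv 1$ on $B_R$, $0\le\xi\le 1$, $|\nabla\xi|\le C/R$ and $|\Delta\xi|\le C/R^2$, and test the two equations against $\xi^{2m}$ for $m$ large. Integration by parts gives the crude bounds
\[
 \int_{B_R} v \ \le\ \frac{C}{R^2}\int_{\p} u \ \le\ C\,\AO{u}_\infty\, R^{N-2},
\]
and, inserting this into the analogous estimate coming from the $v$-equation,
\[
 \int_{B_R} |x|^\alpha u^p \ \le\ \frac{C}{R^2}\int_{\p} v \ \le\ C\, R^{N-4},
\]
where $\p$ is the annulus on which $\nabla\xi$ is supported. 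These must then be refined by feeding the system into itself: testing the $v$-equation against $v^{s}\xi^{2m}$ and using $-\Delta(v^{1+s})\le (1+s)\,|x|^\alpha u^p\,v^{s}$ (valid for $s\ge0$) controls $\int v^{1+s}$ by a weighted integral of $u^p v^s$, and testing the $u$-equation against suitable powers of $u$ (legitimate since $u$ is positive and, being superharmonic, locally bounded below) couples back estimates for $u$; iterating this, together with H\"older against the weight $|x|^\alpha\le (2R)^\alpha$ (and $|x|^\alpha\asymp R^\alpha$ on the far annuli), produces a hierarchy of weighted $L^q$ estimates for $u$ and $v$. Because $v$ is superharmonic and $u$ is bounded and superharmonic, each such local integral bound converts back to a pointwise bound via the mean value inequality, e.g.\ $v(x)\ge c\,R^{-N}\int_{B_R} v$ for $x\in B_R$.

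The core of the argument is then the iteration of Souplet, as adapted by Phan--Souplet to the H\'enon weight. One organizes the above into a self-improving inequality for a single monotone quantity --- schematically $F(R)=R^{-\theta}\int_{B_R}|x|^\alpha u^{p+1}$, or an $L^q$ norm of $v$ --- in which each application lowers the admissible exponent $\theta$ by a fixed positive amount; the key extra ingredient that makes each step gain a power of $R$ is a measure estimate bounding $\bigl|\{x\in B_R:\ u(x)>\lambda\}\bigr|$ in terms of the crude bound $\int_{B_R}|x|^\alpha u^p\le CR^{N-4}$. The arithmetic of this iteration is exactly where $N=5$ and the restriction $1<p<p_4(\alpha)$ enter: writing $\gamma:=\frac{4+\alpha}{p-1}$ for the scaling exponent of \eqref{hen}, the hypothesis $p<p_4(\alpha)$ is equivalent to $\gamma>\frac{N-4}{2}=\tfrac12$, and it is (a version of) this inequality, together with $N=5$, that guarantees $\theta$ becomes negative after finitely many steps. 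Once $\theta<0$, letting $R\to\infty$ in the corresponding estimate forces $\int_{\IR^5}|x|^\alpha u^{p}=0$, hence $u\equiv 0$ since $|x|^\alpha>0$ a.e.\ --- contradicting $u>0$ and finishing the proof.

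The step I expect to be the main obstacle is closing this feedback loop while correctly carrying the unbounded weight. For $\alpha>0$ the factor $|x|^\alpha$ is largest on the outer annulus $\p$, so every test-function estimate picks up an extra power $R^\alpha$, and the measure/interpolation step has to be arranged so that the gain at each iteration strictly exceeds this loss and accumulates to an admissible negative exponent in a bounded number of steps; making the exponents close is precisely what confines the method to $N=5$ (just as it confines \cite{souplet} to $N=4$ and Theorem~\ref{pha} to $N=3$), and going beyond $N=5$ would require genuinely new input. A secondary, routine nuisance is justifying the various integrations by parts and the manipulations with powers of $u$ and $v$ against the annular cutoffs, but this is straightforward given the regularity and positivity already established.
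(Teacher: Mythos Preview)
Your preliminary step --- writing \eqref{hen} as the system $-\Delta u=v$, $-\Delta v=|x|^\alpha u^p$ and using the boundedness of $u$ to force $v>0$ --- is correct and is indeed the standard starting point. But the core of your outline misidentifies what Souplet's method actually is, and as written the argument does not close.

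The two ingredients that drive the proof in \cite{souplet}, \cite{phan}, and in the paper are (i) a Rellich--Pohozaev identity on $B_R$, which produces an inequality of the form
\[
\int_{B_R}|x|^\alpha u^{p+1}\ \le\ C\sum_i I_i(R),
\]
where each $I_i(R)$ is a surface integral over $\partial B_R$ involving $u$ and its derivatives up to third order, and (ii) Sobolev embeddings \emph{on the sphere} $S^{N-1}$, applied after one has converted the annular volume estimates (your Mitidieri--Pohozaev bounds and their interpolated variants) into $L^q(S^{N-1})$ bounds along a well-chosen sequence $R_m\to\infty$ via a measure argument. The Pohozaev step is precisely what makes the Hardy--Sobolev exponent $p_4(\alpha)$ appear; the sphere-Sobolev step is what exploits the dimension drop from $N$ to $N-1$ and is exactly why the method reaches one dimension higher than the direct test-function bounds (hence $N=5$ here, $N=3$ in Theorem~\ref{pha}, $N=4$ in \cite{souplet}). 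Neither ingredient appears in your outline.

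What you describe instead --- a ``self-improving inequality'' for $R^{-\theta}\int_{B_R}|x|^\alpha u^{p+1}$, a level-set measure estimate, and a feedback of test-function bounds for powers of $u$ and $v$ into each other --- is closer in spirit to Serrin--Zou or Mitidieri--Pohozaev iterations. Those methods certainly give \emph{something} (indeed your crude bound $\int_{B_R}|x|^\alpha u^p\le CR^{N-4}$ is Lemma~\ref{est} with $p$ replaced by its worst case), but they are known not to reach the full subcritical range; without the Pohozaev identity there is no mechanism in your scheme that singles out $p_4(\alpha)$, and the gain-per-step you assert (``lowers $\theta$ by a fixed positive amount'') is never exhibited and would not in fact be uniform up to $p_4(\alpha)$. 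In short, the main obstacle is not, as you suggest, bookkeeping the weight $|x|^\alpha$ through an iteration; it is that the iteration you propose is the wrong engine. You need to start from a Pohozaev inequality on $B_R$, derive interior $W^{4,1+\E}$-type estimates on annuli, pass to $L^q(S^{N-1})$ estimates along a good sequence $R_m$, and then control the boundary terms with the embeddings $W^{4,1+\E}(S^4)\hookrightarrow L^\infty$, $W^{2,1}(S^4)\hookrightarrow L^2$, $W^{1,1}(S^4)\hookrightarrow L^{4/3}$, etc.
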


\section{Proof of Theorem \ref{Main}}

 We first introduce some notation.  For $ R >0$  we define $ B_R:=\{ x \in \IR^N: |x|<R\}$ and we let $ \partial B_R$ denote the boundary of the ball.  
We begin with a Rellich-Pohozaev argument.

 \begin{lemma} \cite{fazly} \label{poho} Suppose that $ u$ is a bounded nonnegative  solution of (\ref{hen}) with  $ N \ge 5$, $ \alpha>0$ and $1 < p < p_4(\alpha)$. Define \[ F(R):= \int_{B_R} |x|^\alpha u^p.\] 
 
  Then there exists  $ C=C(N,p,\alpha)>0$ but which is independent of $ R$ such that for all $ R \ge 1$ 
 \begin{eqnarray*}
\frac{F(R)}{C}  &\le & R^{1+\alpha} \int_{\partial B_R} u^{p+1}  + \int_{\co} |\Delta u| | \nabla u| \\
&& + \int_{\co} | \nabla (\Delta u)| u - \frac{R}{2C} \int_{\co}(\Delta u)^2 \\
&&+ R \int_{\co}| \nabla (\Delta u)| | \nabla u| +  \int_{\co} |\Delta u| | \nabla (x \cdot \nabla u)|. 
\end{eqnarray*}    
 \end{lemma}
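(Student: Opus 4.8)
The plan is to run a standard Rellich–Pohozaev (scaling) identity on the ball $B_R$ and extract the boundary terms as error terms. First I would multiply the equation $\Delta^2 u = |x|^\alpha u^p$ by $u$ and integrate over $B_R$, integrating by parts twice to get $\int_{B_R}(\Delta u)^2$ on the left after accounting for boundary integrals $\int_{\co}|\nabla(\Delta u)|u$ and $\int_{\co}|\Delta u||\nabla u|$. Separately I would multiply by the Pohozaev multiplier $x\cdot\nabla u$ and integrate over $B_R$. On the left, the term $\int_{B_R}\Delta^2 u\,(x\cdot\nabla u)$ should, after repeated integration by parts, produce a multiple of $\int_{B_R}(\Delta u)^2$ plus boundary contributions controlled by $\int_{\co}|\Delta u||\nabla(x\cdot\nabla u)|$, $R\int_{\co}|\nabla(\Delta u)||\nabla u|$, $R\int_{\co}(\Delta u)^2$, and $\int_{\co}|\nabla(\Delta u)|u$. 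On the right, $\int_{B_R}|x|^\alpha u^p(x\cdot\nabla u) = \frac{1}{p+1}\int_{B_R}|x|^\alpha\, x\cdot\nabla(u^{p+1})$; integrating by parts in $x$ and using $\mathrm{div}(|x|^\alpha x) = (N+\alpha)|x|^\alpha$ gives $-\frac{N+\alpha}{p+1}F(R)$ plus the boundary term $\frac{R^{1+\alpha}}{p+1}\int_{\partial B_R}u^{p+1}$.

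The key algebraic step is then to form the correct linear combination of the two identities so that the interior term $\int_{B_R}(\Delta u)^2$ cancels while the coefficient of $F(R)$ stays strictly positive. Writing the first identity as $\int_{B_R}(\Delta u)^2 = F(R) + (\text{bdry}_1)$ and the $x\cdot\nabla u$ identity schematically as $a\int_{B_R}(\Delta u)^2 = -\frac{N+\alpha}{p+1}F(R) + (\text{bdry}_2)$ for the appropriate constant $a$ coming from the biharmonic Pohozaev identity (this $a$ is essentially $\frac{N-4}{2}$ up to sign conventions), eliminating $\int_{B_R}(\Delta u)^2$ yields $\left(\frac{N+\alpha}{p+1} + a\right)F(R) = (\text{bdry})$, i.e. $cF(R) \le (\text{bdry})$ with $c = \frac{N+\alpha}{p+1} - \frac{N-4}{2}$. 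One checks $c>0$ is exactly equivalent to $p < \frac{N+4+2\alpha}{N-4} = p_4(\alpha)$, which is the hypothesis; so the constant $C = 1/c$ depends only on $N,p,\alpha$ as claimed. Along the way I would use that all boundary integrands are nonnegative after taking absolute values, and absorb the $-\frac{R}{2C}\int_{\co}(\Delta u)^2$ term appearing with a favorable sign (it can be kept on the right with a negative coefficient precisely because it comes from $-R\int_{\co}(\Delta u)^2$ with the right orientation of the normal derivative).

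The restriction $R\ge 1$ enters when bounding $|x|^\alpha \le R^\alpha$ on $\partial B_R$ and in simplifying powers of $R$ in the boundary terms (so that, e.g., a $R^{1+\alpha}$ factor rather than several separate powers appears), using $\alpha>0$. The main obstacle I anticipate is purely bookkeeping: correctly tracking all boundary terms in the biharmonic Pohozaev identity — there are several, involving $\Delta u$, $\nabla(\Delta u)$, $\nabla u$, and $x\cdot\nabla u$ on $\partial B_R$ — and verifying that each is dominated (up to the constant $C$) by one of the six listed quantities, in particular that the curvature/tangential-derivative pieces of $\nabla(x\cdot\nabla u)$ on the sphere are absorbed into $\int_{\co}|\Delta u||\nabla(x\cdot\nabla u)|$ without generating uncontrolled terms. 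Since this identity is quoted from \cite{fazly}, I would cite that computation for the precise constants and focus the write-up on the sign of $c$ and the grouping of boundary terms.
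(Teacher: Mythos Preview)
The paper does not supply its own proof of this lemma; it is simply quoted from \cite{fazly}. Your outline is the standard derivation and is correct: test the equation with $u$ and with the Pohozaev multiplier $x\cdot\nabla u$, integrate by parts on $B_R$, and combine so as to eliminate the interior term $\int_{B_R}(\Delta u)^2$; the resulting coefficient $\frac{N+\alpha}{p+1}-\frac{N-4}{2}$ is positive exactly when $p<p_4(\alpha)$, which gives the constant $C$. Your handling of the boundary bookkeeping and of the favorable-sign term $-\frac{R}{2C}\int_{\partial B_R}(\Delta u)^2$ (needed later in Corollary~\ref{poh} to absorb the $\varepsilon R\int_{\partial B_R}(\Delta u)^2$ pieces) is also right.

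One small observation worth recording: the quantity your two identities actually control is $\int_{B_R}|x|^\alpha u^{p+1}$, not $\int_{B_R}|x|^\alpha u^{p}$ as written in the definition of $F(R)$. This is consistent with the end of the proof of Theorem~\ref{Main}, where the paper concludes $\int_{\mathbb{R}^N}|x|^\alpha u^{p+1}=0$; so the exponent $p$ in the lemma's definition of $F(R)$ appears to be a typo for $p+1$, and your computation matches the intended statement.
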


 We now eliminate and simplify some of the terms from the above lemma.

 \begin{cor}  \label{poh} Suppose that $ u$ is a bounded nonnegative  solution of (\ref{hen}) with  $ N \ge 5$, $ \alpha>0$ and $1 < p < p_4(\alpha)$.  Then there is some $ C=C(N,p,\alpha)>0$ such that 
 \begin{eqnarray} \label{nonum}
 \frac{F(R)}{C} & \le &  R^{1+\alpha} \int_{\partial B_R} u^{p+1}  + \frac{1}{R} \int_{\partial B_R} | \nabla u|^2 \nonumber \\
 && + \int_{\partial B_R} | \nabla \Delta u | u  + R \int_{\partial B_R} | \nabla \Delta u | | \nabla u| \nonumber \\
 && + R \int_{\partial B_R} |D^2 u|^2 
 \end{eqnarray} for all $ R \ge 1$.   
 \end{cor}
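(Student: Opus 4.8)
The plan is to start from the Rellich-Pohozaev inequality of Lemma \ref{poho} and bound each of the boundary integrals appearing on the right-hand side by the five terms listed in \eqref{nonum}. The term $R^{1+\alpha}\int_{\partial B_R}u^{p+1}$ is already in the desired form, so nothing needs to be done there. The genuinely negative term $-\frac{R}{2C}\int_{\partial B_R}(\Delta u)^2$ can simply be discarded, since dropping it only weakens the inequality; alternatively one keeps in mind that $(\Delta u)^2 \le C|D^2 u|^2$ pointwise, so even if one wanted to retain it, it is absorbed into $R\int_{\partial B_R}|D^2u|^2$. Thus the real work is to handle the four mixed terms
\[
\int_{\partial B_R}|\Delta u||\nabla u|, \quad \int_{\partial B_R}|\nabla(\Delta u)|u, \quad R\int_{\partial B_R}|\nabla(\Delta u)||\nabla u|, \quad \int_{\partial B_R}|\Delta u||\nabla(x\cdot\nabla u)|.
\]

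For the first term I would use Young's inequality in the form $|\Delta u||\nabla u| \le \frac{R}{2}(\Delta u)^2 + \frac{1}{2R}|\nabla u|^2$; the first piece is controlled by $R\int_{\partial B_R}|D^2u|^2$ (using $|\Delta u|\le C|D^2u|$) and the second piece is exactly the term $\frac1R\int_{\partial B_R}|\nabla u|^2$ in \eqref{nonum}. The second term $\int_{\partial B_R}|\nabla(\Delta u)|u$ already appears verbatim in \eqref{nonum} (it is written $\int_{\partial B_R}|\nabla\Delta u|u$), and similarly the third term $R\int_{\partial B_R}|\nabla(\Delta u)||\nabla u|$ is listed directly. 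For the last term I would expand $x\cdot\nabla u = \sum_i x_i\partial_i u$ and differentiate: $\nabla(x\cdot\nabla u)$ has entries $\partial_j u + \sum_i x_i\partial_{ij}u$, so on $\partial B_R$ one has $|\nabla(x\cdot\nabla u)| \le |\nabla u| + R|D^2u|$. Hence $|\Delta u||\nabla(x\cdot\nabla u)| \le C|D^2u|(|\nabla u| + R|D^2u|)$, and another application of Young's inequality ($|D^2u||\nabla u| \le \frac{R}{2}|D^2u|^2 + \frac{1}{2R}|\nabla u|^2$) reduces this to a combination of $R\int_{\partial B_R}|D^2u|^2$ and $\frac1R\int_{\partial B_R}|\nabla u|^2$.

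Assembling these estimates, every term on the right of Lemma \ref{poho} is bounded by a constant times one of the five terms in \eqref{nonum}, and adjusting the constant $C$ (absorbing the various numerical factors and the $\min$/$\max$ over the finitely many applications of Young's inequality) yields the claimed inequality for all $R\ge 1$. I do not anticipate a serious obstacle here; the only point requiring a little care is the bookkeeping on the constant (it must stay independent of $R$, which is automatic since every inequality used is scale-respecting and the exponents of $R$ are tracked explicitly) and the pointwise comparison $|\Delta u|\le C(N)|D^2u|$ together with the product-rule identity for $\nabla(x\cdot\nabla u)$, both of which are elementary. The content of the corollary is essentially organizational: it repackages the six-term Pohozaev bound into a cleaner five-term form better suited to the subsequent blow-down/feedback argument.
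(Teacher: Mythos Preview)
Your argument is correct and follows the same outline as the paper's proof: Young's inequality for the $|\Delta u|\,|\nabla u|$ term, the product-rule expansion $|\nabla(x\cdot\nabla u)|\le C(|\nabla u|+R|D^2u|)$ on $\partial B_R$, and another Young's inequality to split the resulting mixed term. The one genuine difference is in how the $(\Delta u)^2$ contributions from Young's inequality are disposed of. You simply drop the negative term $-\frac{R}{2C}\int_{\partial B_R}(\Delta u)^2$ from Lemma~\ref{poho} and invoke the pointwise bound $(\Delta u)^2\le C(N)|D^2u|^2$ to feed everything into the $R\int_{\partial B_R}|D^2u|^2$ term. The paper instead keeps the negative term and runs Young's inequality with a small parameter $\E$, so that the positive pieces $\E R\int_{\partial B_R}(\Delta u)^2$ are absorbed into $-\frac{R}{2C}\int_{\partial B_R}(\Delta u)^2$. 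Both routes are valid; yours is slightly more direct since it avoids the small-$\E$ bookkeeping, while the paper's version makes explicit why the negative term in the Pohozaev identity is useful.
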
  
 For future reference we label the terms on the right hand side of (\ref{nonum}) as $ I_1,I_2,...., I_5$,  where $I_1$ is the first term on the right and $I_5$ is the last. 
 
 \begin{proof} One immediately obtains the desired result by using the following two inequalities and taking $ \E>0$ small, 
\begin{equation}  \label{x} \int_{\partial B_R} | \Delta u | | \nabla u| \le \E R  \int_{\partial B_R} | \Delta u|^2 + \frac{C_\E}{R}  \int_{\partial B_R}  | \nabla u|^2, 
\end{equation}
 \begin{equation} \label{y}
  \int_{\partial B_R} |\Delta u|  | \nabla (  x \cdot \nabla u(x))|  \le \E R \int_{\partial B_R} (\Delta u)^2 + \frac{C(\E)}{R} \int_{\partial B_R}  | \nabla u|^2  + C(\E) R \int_{\partial B_R} | D^2 u|^2.
  \end{equation}   (\ref{x}) is obtained from Young's inequality after inserting the factor $ \sqrt{R}$.  To show (\ref{y}) one begins with the inequality 
 \[ | \nabla (  x \cdot \nabla u(x))|^2 \le C \left\{ | \nabla u(x)|^2 + |x|^2 |D^2 u(x)|^2 \right\},\] where $ C= C(N)>0$.  This is obtained by direct calculation and some obvious estimates.    Then one has  
 \begin{eqnarray*}
  \int_{\partial B_R} |\Delta u|  | \nabla (  x \cdot \nabla u(x))| & \le & \E R \int_{\partial B_R} (\Delta u)^2 \\
  && + \frac{C(\E)}{R} \int_{\partial B_R} | \nabla (  x \cdot \nabla u(x))|^2 \\
  & \le & \E R \int_{\partial B_R} (\Delta u)^2 + \frac{C(\E)}{R} \int_{\partial B_R}  | \nabla u|^2 \\
  && + C(\E) R \int_{\partial B_R} | D^2 u|^2.
  \end{eqnarray*} 
  
 \end{proof}   To show the nonexistence of a positive solution of (\ref{hen}) one would like to    show that each of these boundary integral terms goes to zero as  $ R \rightarrow \infty$.      The trick in Souplet's method, see \cite{phan} and \cite{souplet},  is to view $ R$ as a parameter and then to use various Sobolev inequalities on the sphere $ S^{N-1}$  to estimate these boundary integrals.  One of the benefits of this is that one has improved embeddings since they are now working on a lower dimensional object.

 The following lemma is just the standard $L^p$ regularity and interpolation results written in a scale invariant way. 
  
 \begin{lemma}  
 
\begin{enumerate} \item ($L^p$ regularity) For $ 1 < t < \infty$ there exists $C>0$ such that for all $ R \ge 1$ and for all sufficiently regular $v$   we have 
\begin{equation} \label{reg}
 \int_{\m}   |D_x^4 v|^{t} \le C \int_{\p} |\Delta^2 v|^{t} + \frac{C}{R^{4 t}} \int_{\p} |v|^{t}. 
 \end{equation}

\item (Interpolation) For $ 1 \le t  < \infty$ and $ 1 \le j \le 3$ (an integer) there exists $ C>0$ such that for all $ R \ge 1$ and for all sufficiently regular $v$ we have 
\begin{equation} \label{interp}
 \int_{\m} |D_x^j v |^t \le C R^{t(4-j)} \int_{\p} |\Delta^2 v|^t+ \frac{C}{R^{jt}} \int_{\p} |v|^t.
 \end{equation}   
 
\end{enumerate} 

\end{lemma}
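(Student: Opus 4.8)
The plan is to reduce both assertions to the fixed scale $R=1$ by a dilation, and at that scale to split $v$ into a Newtonian-type potential carrying the source $\Delta^2 v$ plus a biharmonic remainder; the dichotomy in the hypotheses --- $1<t<\infty$ for (\ref{reg}) but $1\le t<\infty$ for (\ref{interp}) --- will drop out of the fact that the derivatives of the fundamental solution of $\Delta^2$ up to order three are locally integrable, while the fourth derivatives form a genuine singular integral.

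First I would set $v_R(y):=v(Ry)$, so that $D_y^k v_R(y)=R^k(D_x^k v)(Ry)$ and hence $\Delta_y^2 v_R(y)=R^4(\Delta_x^2 v)(Ry)$. The substitution $x=Ry$ maps $\{1/2<|y|<1\}$ onto $\m$ and $\{1/4<|y|<2\}$ onto $\p$ with $dx=R^N\,dy$, so applying the $R=1$ form of the two inequalities to $v_R$ and carrying out this change of variables produces the stated estimates after dividing through by $R^{4t-N}$, respectively $R^{jt-N}$. It therefore suffices to prove, with $A_1:=\{1/2<|y|<1\}$ and $A_2:=\{1/4<|y|<2\}$ (note $A_1\Subset A_2$) and a constant depending only on $N,t,j$, that $\int_{A_1}|D^4 v|^t\le C\bigl(\int_{A_2}|\Delta^2 v|^t+\int_{A_2}|v|^t\bigr)$ for $1<t<\infty$, and $\int_{A_1}|D^j v|^t\le C\bigl(\int_{A_2}|\Delta^2 v|^t+\int_{A_2}|v|^t\bigr)$ for $1\le j\le3$ and $1\le t<\infty$.

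Next, at scale one I would fix $\zeta\in C_c^\infty(A_2)$ with $\zeta\equiv1$ on a neighbourhood of $\overline{A_1}$, let $\Gamma$ denote the fundamental solution of $\Delta^2$ on $\IR^N$, and write $v=v_1+h$ with $v_1:=\Gamma*(\zeta\,\Delta^2 v)$ and $h:=v-v_1$. Then $\Delta^2 v_1=\zeta\,\Delta^2 v$, so $\Delta^2 h=(1-\zeta)\Delta^2 v$ vanishes near $\overline{A_1}$; hence $h$ is smooth there, and interior estimates for the biharmonic equation give $\|h\|_{W^{4,t}(A_1)}\le C\|h\|_{L^t(A_1')}\le C\bigl(\|v\|_{L^t(A_2)}+\|v_1\|_{L^t(A_2)}\bigr)$ on any annulus $A_1\Subset A_1'\Subset\{\zeta=1\}$. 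For $v_1$ one has $D^j v_1=(D^j\Gamma)*(\zeta\,\Delta^2 v)$, and since $\zeta\,\Delta^2 v$ is supported in the fixed bounded set $A_2$ only the restriction of the kernel to a ball enters; as $|D^j\Gamma(x)|\le C|x|^{4-N-j}$ is locally integrable exactly when $j\le3$, Young's convolution inequality yields $\|D^j v_1\|_{L^t(A_1)}\le C\|\Delta^2 v\|_{L^t(A_2)}$ for every $t\in[1,\infty]$ and $0\le j\le3$. Feeding the case $j=0$ into the bound for $h$ finishes (\ref{interp}) at $R=1$ and controls the lower-order part of (\ref{reg}); for the top-order term, $D^4\Gamma(x)\sim|x|^{-N}$ is a Calder\'on--Zygmund kernel, so the $L^t\to L^t$ boundedness of singular integrals (valid precisely for $1<t<\infty$) gives $\|D^4 v_1\|_{L^t(A_1)}\le C\|\zeta\,\Delta^2 v\|_{L^t}\le C\|\Delta^2 v\|_{L^t(A_2)}$, completing (\ref{reg}) at $R=1$. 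Undoing the scaling then gives the lemma.

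I do not expect a genuine obstacle here: the substance is classical elliptic $L^t$-regularity, and the only things needing care are the bookkeeping of the powers of $R$ in the scaling step and choosing the cutoff $\zeta$ so that $h$ is biharmonic on a full neighbourhood of the target annulus. The one conceptual point is that the splitting $v=v_1+h$ makes transparent why the hypotheses differ: all derivatives of $\Gamma$ of order $\le3$ are $L^1_{\mathrm{loc}}$, so those bounds persist down to $t=1$, whereas $D^4\Gamma$ is a true singular integral and the estimate can only hold in the open range $1<t<\infty$.
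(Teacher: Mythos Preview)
Your argument is correct. The paper does not actually supply a proof of this lemma: it introduces it with the sentence ``the following lemma is just the standard $L^p$ regularity and interpolation results written in a scale invariant way'' and then moves on, so there is no approach to compare against. Your proposal fills in precisely those standard details. The dilation $v_R(y)=v(Ry)$ reduces matters to the unit annulus with the correct powers of $R$, and the splitting $v=v_1+h$ with $v_1=\Gamma*(\zeta\,\Delta^2 v)$ cleanly isolates why the two parts of the lemma carry different ranges of $t$: for $j\le 3$ the kernel $D^j\Gamma$ is locally integrable, so Young's inequality handles (\ref{interp}) for every $t\ge 1$, whereas $D^4\Gamma$ is a genuine Calder\'on--Zygmund kernel, forcing $1<t<\infty$ in (\ref{reg}). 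The interior estimate for the biharmonic remainder $h$ is unproblematic since $h$ is smooth on a neighbourhood of $\overline{A_1}$. In short, your write-up is a correct and transparent realisation of the appeal to ``standard $L^p$ regularity and interpolation'' that the paper leaves implicit.
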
 
 
The next lemma follows from the rescaled test function method from \cite{miti}

 \begin{lemma} \cite{miti} \label{est}  Suppose that $ u$ is a positive weak supersolution of  (\ref{hen}).  Then there exists  $ C>0$ such that  \[ \int_{B_{2R}} |x|^\alpha u^p \le C R^{N-\frac{\alpha}{p-1}-\frac{4p}{p-1}}\] for all $ R \ge 1$. 
 \end{lemma} 
 
 Note that if $ p < \frac{N+\alpha}{N-4}$ then the exponent of $ R$ on the right hand side  is negative and this is enough to show, after sending $ R \rightarrow \infty$, that there is no positive solution (or positive weak supersolution) to (\ref{hen}) as mentioned in the introduction.

\begin{cor}  \label{main_est} Suppose that $ N=5$, $ 1 < p < p_4(\alpha)=9+ 2 \alpha$ and that $u$ is a bounded positive classical solution of (\ref{hen}).   

 \begin{enumerate}  
\item Then there exists  $ C>0$ such that
 \[ \int_{B_{2R}} u \le C R^{N- \frac{(4+\alpha)}{p-1}}, \] for all $ R \ge 1$.

\item  Suppose that $ \E>0$. Then there exist some $C_\E>0$ such that 
\[\int_{\p} |D_x^4 u|^{1+\E} \le C_\E R^{N- \frac{4p}{p-1} - \frac{\alpha}{p-1} +\alpha \E}, \] for all $ R \ge 1$.
  
\item   There  exists  $C>0$ such that
\[ \int_{\p} |D_x^3 u| \le C R^{N - \frac{1+\alpha +3p}{p-1} }, \]  for all $ R \ge 1$.

\item There exists $ C>0$ such that 
\[ \int_{\p} |D_x u| \le C R^{ N - \frac{p+3+\alpha}{p-1}},\]  for all $ R \ge 1$.

\item There exists $ C>0$ such that 
\[  \int_{\p} |D^2_x u| \le C R^{N - \frac{\alpha+2p +2}{p-1}},\] for all $ R \ge 1$. 

 \end{enumerate}

\end{cor}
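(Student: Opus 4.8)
The plan is to establish part 1 first and then to read off parts 2--5 from it together with the $L^p$-regularity and interpolation inequalities (\ref{reg})--(\ref{interp}), Lemma \ref{est}, and the boundedness of $u$; once part 1 is available, parts 2--5 are essentially bookkeeping. At the outset it is convenient to reduce to the range $5+\alpha \le p < 9+2\alpha$: if $p < \frac{N+\alpha}{N-4} = 5+\alpha$ then the exponent $N-\frac{\alpha}{p-1}-\frac{4p}{p-1}$ in Lemma \ref{est} is negative, so letting $R\to\infty$ would force $u\equiv 0$, and the corollary holds vacuously. In the reduced range one has $p-1 \ge 4+\alpha$, hence $\frac{\alpha+4}{p-1}\le 1$ (so $N-\frac{\alpha+4}{p-1}\ge 4 > 0$) and $\frac{\alpha}{p-1} < 1 < N$; both inequalities enter part 1.

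For part 1 I would split $\int_{B_{2R}}u = \int_{B_1}u + \int_{B_{2R}\setminus B_1}u$. The first integral is finite (as $u$ is bounded) and is therefore $\le C R^{N-\frac{\alpha+4}{p-1}}$ for all $R\ge 1$, since that exponent is positive. For the second, Hölder with exponents $p$ and $\frac{p}{p-1}$ gives
\[ \int_{B_{2R}\setminus B_1}u \;\le\; \Big(\int_{B_{2R}}|x|^\alpha u^p\Big)^{1/p}\Big(\int_{B_{2R}}|x|^{-\frac{\alpha}{p-1}}\Big)^{\frac{p-1}{p}}, \]
and since $\frac{\alpha}{p-1}<N$ the weight is integrable near the origin with $\int_{B_{2R}}|x|^{-\alpha/(p-1)} \le C R^{N-\alpha/(p-1)}$. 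Substituting Lemma \ref{est} and simplifying (the exponent collapses to $N-\frac{\alpha+4}{p-1}$) yields part 1. Equivalently, one may decompose $B_{2R}\setminus B_1$ into dyadic shells and sum Lemma \ref{est} over them, the geometric series converging precisely because $N-\frac{\alpha+4}{p-1} > 0$.

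For part 2 I would apply (\ref{reg}) with $t = 1+\E$ and $v = u$; covering $\p$ by finitely many annuli of type $\m$ and summing the resulting estimates (with $R$-independent constants) gives
\[ \int_{\p}|D_x^4 u|^{1+\E} \;\le\; C\int_{B_{4R}\setminus B_{R/8}}|x|^{\alpha(1+\E)}u^{p(1+\E)} + \frac{C}{R^{4(1+\E)}}\int_{B_{4R}\setminus B_{R/8}}u^{1+\E}. \]
Since $u$ is bounded, $u^{p(1+\E)}\le \|u\|_\infty^{p\E}u^p$ and $u^{1+\E}\le \|u\|_\infty^{\E}u$, and since $\alpha > 0$ and $|x|\le 4R$ on that region, $|x|^{\alpha(1+\E)}\le C R^{\alpha\E}|x|^\alpha$; hence the first term is $\le C R^{\alpha\E}\int_{B_{4R}}|x|^\alpha u^p \le C R^{\alpha\E + N - \frac{\alpha+4p}{p-1}}$ by Lemma \ref{est}, which is exactly the asserted bound, while the second is $\le C R^{-4(1+\E)}\int_{B_{4R}}u \le C R^{N-\frac{\alpha+4}{p-1}-4-4\E}$ by part 1, whose exponent is $\le N-\frac{4p}{p-1}-\frac{\alpha}{p-1}+\alpha\E$ (the required inequality being $0\le(4+\alpha)\E$). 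For parts 3--5 I would instead apply (\ref{interp}) with $t = 1$ and $j = 3, 1, 2$ respectively, again after covering $\p$ by annuli of type $\m$: bounding $\int_{\p}|\Delta^2 u| = \int_{\p}|x|^\alpha u^p$ via Lemma \ref{est} and $\int_{\p}u$ via part 1, in each case the term coming from $\Delta^2 u$ matches the stated exponent exactly, while the term coming from $\int u$ is dominated, the needed inequalities reducing respectively to $3\le 3$, $1\le 1$, $2\le 2$.

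The main obstacle is part 1, the only claim not obtained formally from the regularity estimates: the Hölder step closes only because $\frac{\alpha}{p-1}<N$ and $N-\frac{\alpha+4}{p-1}>0$, which is precisely where the dimension $N=5$ and the subcriticality $p < p_4(\alpha)$ (after discarding the trivial regime $p < \frac{N+\alpha}{N-4}$) are used. The remaining work is routine exponent arithmetic, together with the mild nuisance of checking that the constants produced when covering $\p$ by annuli of type $\m$ are independent of $R$.
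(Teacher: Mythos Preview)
Your proof is correct and follows essentially the same route as the paper: H\"older against Lemma \ref{est} for part 1, the $L^p$ regularity estimate (\ref{reg}) together with boundedness of $u$ for part 2, and the interpolation inequality (\ref{interp}) with $t=1$ for parts 3--5. Your explicit reduction to the range $p\ge 5+\alpha$ (so that $N-\frac{4+\alpha}{p-1}>0$ and $\frac{\alpha}{p-1}<N$) and the verification that the two competing exponents in each interpolation step coincide are details the paper leaves to the reader.
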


 \begin{proof} 
1. Here we use the result from Lemma \ref{est}  and H\"older's inequality. \\
2. We use the $L^p$ regularity result to see 
\begin{eqnarray*}
\int_{\m} |D_x^4 u|^{1+\E} & \le & C \int_{\p} (\Delta^2 u)^\E (\Delta^2 u) + \frac{C}{R^{1+\E}} \int_{\p} u^{1+\E}\\
& \le & C R^\E \int_{\p} |x|^\alpha u^p + \frac{C}{R^{1+\E}} \int_{\p} u
\end{eqnarray*} and one now uses Lemma \ref{est} and 1 to obtain the desired result.  \\ For the remainder of the proofs one uses the estimates from 1 and 2 and uses the interpolation inequality.

 \end{proof}

 We now introduce the various notations we will be using on the sphere.  Given nonzero $ x \in \IR^N$ we will use  spherical coordinates $ r=|x|$ and $\theta= \frac{x}{|x|} \in S^{N-1}$.   We will write $ v(x) = v(r,\theta)$.  Also unless otherwise stated $L^p$ norms will be over the unit sphere $ S^{N-1}$,  so given some function $ v $ defined on $ S^{N-1}$ we have   $ \| v\|_p:=\left\{ \int_{S^{N-1}} |v|^p d \theta \right\}^\frac{1}{p}$.  We will let  
 \[ \|v(R)\|_{p}^p:= \int_{S^{N-1}} v(R,\theta) d \theta.\]

Another key idea from \cite{souplet} is to turn  the volume estimates from Lemma \ref{est} and Corrolary \ref{main_est} 
into estimates which are valid over spheres of increasing radii $ R_m \rightarrow \infty$.  To illustrate the idea we assume that we have  estimates of the form 
\[ \int_{\m} f_i(x) dx \le C_i R^{N-\alpha_i}, \] for $ i=1,2,...,n$ and where $ 0 \le f_i$. 
The goal is to find a sequence $ R_m \rightarrow \infty$ such that 
\[ g_i(R_m):= \int_{S^{N-1}} f(R_m,\theta) d \theta,\] satisfies 
\[ g_i(R_m) \le C R_m^{-\alpha_i},\] for all $ 1 \le i \le n$.    So note that we can write 
\[ \int_{\frac{R}{2}}^R r^{N-1}g_i(r) dr = \int_{\m} f(x) dx \le C_i R^{N-\alpha_i}.\]  We now define the sets 

\[ \Gamma_i(R):=\left\{ r \in ( \frac{R}{2},R):g_i(r) \ge K R^{-\alpha_i} \right\},\] where we will pick $K>0$ later, 
and note that we have 

\[ \int_{\Gamma_i(R)} r^{N-1} g_i(r) dr \le \int_{\m} f_i(x) dx \le C_i R^{N-\alpha_i} \] and the left side has a lower bound given by 
\[ | \Gamma_i(R)| R^{N-\alpha_i-1} \frac{k}{R 2^{N-1}}.\]  This shows that 
\[ | \Gamma_i(R)| \le \frac{C_i 2^{N-1}}{K} R,\] and by taking $ K>0$ big one has, by looking at the measure, that there exists some $ \tilde{R}$ such that  
\[ \tilde{R} \in \left( \frac{R}{2}, R \right) \backslash \cup_{i=1^n} \Gamma_i(R).\]    From this one can conclude that
\[ g_i( \tilde{R}) \le  K (2^{\alpha_i} + 2^{-\alpha_i}) \tilde{R}^{-\alpha_i},\]  and so there is some $C>0$ and $ 1 \le R_m \rightarrow \infty$ such that $ g_i(R_m) \le C R_m^{-\alpha_i}$ for all $ 1 \le i \le n$.

 The following lemma is immediate after using the above procedure along with the estimates from  Corollary  \ref{main_est}.

\begin{lemma} \label{decay}  Let $ \E>0$ be small.  Then there exists some $ K>0$ and $ 1 \le R_m \rightarrow \infty$ such that 
\begin{equation}
\| D^3_x u(R_m)\|_1 \le K R_m^\frac{-\alpha-3 p-1}{p-1},
\end{equation}   
\begin{equation}
\| D_x u(R_m)\|_1 \le K R_m^\frac{-3 - \alpha-p}{p-1},
\end{equation}  
\begin{equation}
\| u(R_m)\|_1 \le K R_m^\frac{-4-\alpha}{p-1},
\end{equation} 
\begin{equation} \label{fourth}
 \| D^4_x u(R_m) \|_{1+\E}^{1+\E} \le K R_m^{\frac{-4p-\alpha}{p-1}+\alpha \E}, 
 \end{equation}
\begin{equation}
\| D_x^2 u(R_m)\|_1 \le  K R_m^\frac{-\alpha-2p-2}{p-1}.  
\end{equation}

\end{lemma}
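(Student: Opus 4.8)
The plan is to apply, essentially verbatim, the measure-theoretic selection argument described in the paragraph preceding the statement, fed by the five volume bounds of Corollary \ref{main_est}. Set
\[ f_1 = u, \quad f_2 = |D_x^4 u|^{1+\E}, \quad f_3 = |D_x^3 u|, \quad f_4 = |D_x u|, \quad f_5 = |D_x^2 u|, \]
all of which are nonnegative, and put $\alpha_1 = \frac{4+\alpha}{p-1}$, $\alpha_2 = \frac{4p+\alpha}{p-1} - \alpha \E$, $\alpha_3 = \frac{1+\alpha+3p}{p-1}$, $\alpha_4 = \frac{p+3+\alpha}{p-1}$ and $\alpha_5 = \frac{\alpha+2p+2}{p-1}$, so that Corollary \ref{main_est} reads $\int_{D_i} f_i \le C_i R^{N-\alpha_i}$ for all $R \ge 1$, where $D_1 = B_{2R}$ and $D_i = \p$ for $i = 2,3,4,5$. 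Since the annulus $\m$ is contained in each $D_i$ and the $f_i$ are nonnegative, we also get $\int_{\m} f_i \le C_i R^{N-\alpha_i}$, which is exactly the hypothesis needed to run the procedure.

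First I would carry out the selection as described: writing $g_i(r) = \int_{S^{N-1}} f_i(r,\theta)\, d\theta$ and passing to polar coordinates gives $\int_{R/2}^{R} r^{N-1} g_i(r)\, dr \le C_i R^{N-\alpha_i}$, and defining $\Gamma_i(R) = \{ r \in (R/2,R) : g_i(r) \ge K R^{-\alpha_i} \}$ and bounding the contribution of $\Gamma_i(R)$ from below yields $|\Gamma_i(R)| \le \frac{C_i 2^{N-1}}{K} R$. The one point requiring care is that the selection must be performed \emph{simultaneously} for all five indices: one chooses $K = K(N,p,\alpha,\E) > 0$ large enough that $\sum_{i=1}^{5} |\Gamma_i(R)| < \frac{R}{2} = \big| (R/2,R) \big|$, whence there exists $\tilde R \in (R/2,R) \setminus \bigcup_{i=1}^{5} \Gamma_i(R)$, and for this $\tilde R$ one has $g_i(\tilde R) < K R^{-\alpha_i} \le K (2^{\alpha_i} + 2^{-\alpha_i}) \tilde R^{-\alpha_i}$ for every $i$, the two-sided factor absorbing $R/2 < \tilde R < R$ irrespective of the sign of $\alpha_i$.

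Finally I would iterate over $R = 2^m$, $m = 1,2,3,\dots$, obtaining $R_m := \tilde R \in (2^{m-1},2^m)$, so $R_m \to \infty$, together with a single constant — which we rename $K$ — such that $g_i(R_m) \le K R_m^{-\alpha_i}$ for $i=1,\dots,5$ and all $m$. Unwinding the definitions, $g_1(R_m) = \| u(R_m) \|_1$, $g_2(R_m) = \| D_x^4 u(R_m) \|_{1+\E}^{1+\E}$, $g_3(R_m) = \| D_x^3 u(R_m) \|_1$, $g_4(R_m) = \| D_x u(R_m) \|_1$ and $g_5(R_m) = \| D_x^2 u(R_m) \|_1$, so these inequalities are precisely the five displayed estimates. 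I expect no genuine obstacle beyond bookkeeping; the only matters needing a moment's attention are the passage from the balls/annuli $D_i$ of Corollary \ref{main_est} to the common annulus $\m$ and making one choice of $K$ serve all five bad sets at once. The $\alpha\E$ shift in $\alpha_2$ is harmless: for fixed $\E$ it is just a fixed real number, and the argument never uses the sign of $\alpha_i$.
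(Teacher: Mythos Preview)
Your proposal is correct and follows exactly the approach the paper intends: the paper states that the lemma is ``immediate after using the above procedure along with the estimates from Corollary \ref{main_est}'', and you have simply spelled out that procedure in detail, including the two bookkeeping points (restricting to the common annulus $\m$ and choosing one $K$ for all five bad sets) that the paper leaves implicit.
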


Note that we can rewrite (\ref{fourth}) as 
\[  \| D_x^4 u(R_m)\|_{1+\E} \le  K R_m^{ \frac{-4p - \alpha}{p-1} + a(\E)},\] where
\[ a(\E)= \left( \frac{4p +\alpha}{p-1} + \alpha \right) \frac{\E}{1+\E}.\]   
Note that $ a_1(\E)>0$ all $ \E>0$ and $ a_1(\E) \rightarrow 0$ as $ \E \rightarrow 0$.

\textbf{Completion of the proof for  Theorem \ref{Main}:}  Let $ \alpha >0$, $ N =5$  and take $ 1 <p < p_4(\alpha) = 9 + 2 \alpha$.  We suppose there exists a positive bounded solution $ u$ of (\ref{hen}) and we choose $ 0 < \E < \frac{1}{100}$ small (we pick precisely later).  
  Let $ 1 \le R_m \rightarrow \infty$ be as promised in Lemma \ref{decay}.  We now find upper bounds for each term $ I_i$ from Corrolary \ref{poh} using a combination of Sobolev embeddings on the unit sphere, H\"older's inequality and the decay estimates from Lemma \ref{decay}.   We omit the index $m$ in what follows,  so $ R=R_m$.  We also omit any constants that are independent of $ R \ge 1$.  All spaces and norms are over the unit sphere.

  \begin{enumerate}

 \item (Estimate for $I_1$).  First note that $ (I_1(R))^\frac{1}{p+1} \le R^\frac{N+\alpha}{p+1} \| u(R)\|_{p+1} \le  R^\frac{N+\alpha}{p+1} \| u(R)\|_{\infty}$.    We have the Sobolev embedding $ W^{4,1+\E} \rightarrow L^\infty$ and so for any sufficiently regular $ v$ defined on the unit sphere we have  $ \| v\|_\infty \le \| D_\theta^4 v\|_{1+\E} + \| v\|_1$.  Taking $ v=u(R)$ gives 
  \[ \| u(R)\|_{\infty} \le \| D_\theta^4 u(R)\|_{1+\E} + \| u(R)\|_1 \le R^4 \| D_x u(R)\|_{1+\E} + \| u(R)\|_1,\] 
  and we now use the decay estimates from Lemma \ref{decay} to see that 
  \[ (I_1(R))^\frac{1}{p+1}   \le R^\frac{N+\alpha}{p+1} \left( R^{ \frac{-4-\alpha}{p-1} + a(\E)} + R^\frac{-4-\alpha}{p-1} \right). \]   Now recalling that $ R >1$ and that $ a(\E)>0$ we see that $ (I_1(R))^\frac{1}{p+1} \le  R^{a_1(\E)}$,  where 
  \[ a_1(\E):= \frac{p-2 \alpha -9}{(p+1)(p-1)} + a(\E), \] and we note this can be made negative by taking $ \E>0$ sufficiently small.

  \item  (Estimate for $I_2$). Firstly we have $ (I_2(R))^\frac{1}{2} \le R^\frac{N-2}{2} \| D_x u(R)\|_2$ and we have the Sobolev embedding $ W^{2,1} \rightarrow L^2$.  Hence  $ \| v \|_2 \le \|D_\theta^2 v\|_2 + \| v\|_1$ and taking $ v=D_x u(R)$ gives $ \| D_x u(R)\|_2 \le R^2 \| D^3_x u(R)\|_1 + \| D_x u(R)\|_1$.   Using the estimates from Lemma \ref{decay} gives 
  \[   (I_2(R))^\frac{1}{2} \le R^{ \frac{3}{2} - \frac{ \alpha + p + 3}{p-1}},\] and note exponent on $ R$ is negative precisely when $ p < 9 + 2 \alpha$.  

\item (Estimate for $I_3$).  First note that $ I_3(R) \le R^{N-1} \| u(R)\|_\infty \| D_x^3 u(R)\|_1$.  Using the estimates from Lemma \ref{decay} and the $L^\infty$ estimate from 1 we arrive at $ I_3(R) \le R^{a_2(\E)}$ where 
\[ a_2(\E):= \frac{p- 2 \alpha -9}{p-1} + a(\E),\]  which we note can be made negative by taking $ \E>0$ sufficiently small.   

\item  (Estimate for $I_5$).  Note that $ (I_5(R))^\frac{1}{2} \le R^\frac{N}{2} \| D_x^2 u(R)\|_2$.  Also note that $ W^{2,1} \rightarrow L^2$ so 
\[ \| D_x^2 u(R)\|_2 \le R^2 \| D_x^4 u(R)\|_1 + \|D_x^2 u(R)\|_1,\] and replacing  the term $\| D_x^4 u(R)\|_1 $ with $ \|D_x^4 u(R)\|_{1+\E}$ one arrives at 
\[  (I_5(R))^\frac{1}{2} \le R^\frac{N}{2} R^{ \frac{-2p-2-\alpha}{p-1} + a(\E)}  = R^{a_2(\E)},\] where $ a_2(\E)$ is defined above. 

\item (Estimate for $I_4$).   First note that by H\"older's inequality we have  $ I_4(R) \le R^N \| D_x^3 u(R)\|_{\frac{4}{3}} \| D_x u(R)\|_{4}$.    Sobolev embedding gives $ W^{1,1} \rightarrow L^\frac{4}{3}$ and so 
\[ \| D_x^3 u(R)\|_\frac{4}{3} \le \| D_\theta D_x^3 u(R)\|_1 + \| D_x^3 u(R)\|_1 \le R \|  D_x^4 u(R)\|_{1+\E} + \| D_x^3 u(R)\|_1.\]   Also we have $ W^{2,\frac{4}{3}} \rightarrow L^4$ and so 
\[ \| D_x u(R)\|_4 \le R^2 \| D_x^3 u(R)\|_\frac{4}{3} + \| D_x u(R)\|_1.\]   Using the estimates from Lemma \ref{decay} gives 
\[ \| D_x^3 u(R)\|_\frac{4}{3} \le R^{ \frac{-3p - \alpha -1}{p-1} + a(\E )}, \quad \| D_x u(R)\|_4 \le R^{ \frac{-p-\alpha-3}{p-1} + a(\E )}.\]  Putting this together gives $ I_4(R) \le R^{a_3(\E)}$ where  
\[ a_3(\E):= \frac{p- 9 - 2 \alpha}{p-1} + 2 a(\E), \] and note this is negative for sufficiently small $ \E>0$.

  \end{enumerate} 

So by taking $ \E>0$ but sufficiently small such that $ a_1(\E),a_2(\E),a_3(\E)<0$ and letting $ R_m$ denote the sequence promised by Lemma \ref{decay} we see that 
\[  \int_{B_{R_m}} |x|^\alpha u^{p+1}  \le C \sum_{i=1}^5 I_i(R_m), \] but $ I_i(R_m) \rightarrow 0$ as $ m \rightarrow \infty$ and so we see that $ \int_{\IR^N} |x|^\alpha u^{p+1} x =0$ contradicting the fact that $ u$ is positive.

\hfill $ \Box$


\begin{thebibliography}{99}



\bibitem{veron} M.B Veron and H. Giacomini, \emph{A new dynamical approach of Emden-Fowler equations
and systems.} Adv. Differential Equations, 15(11-12):1033-1082, 2010.




 \bibitem{Caf} L. Caffarelli, B. Gidas and J. Spruck. \emph{Asymptotic symmetry and local behaviour of semilinear elliptic equations with critical Sobolev growth}. Commun. Pure Appl. Math. 42 (1989), 271–297. 


\bibitem{chen} W. Chen and C. Li, \emph{Classification of solutions of some nonlinear elliptic equations}. Duke Math. J. 63 (1991), 615–622. 


\bibitem{fazly} M. Fazly and N. Ghoussoub, \emph{On the H\'enon-Lane-Emden conjecture}. Preprint (2011). 


\bibitem{Gaz} F. Gazzola and H.-Ch. Grunau, \emph{Radial entire solutions for supercritical biharmonic equations.} Math. Ann. 334 (2006), 905-936.



\bibitem{shi} Z. Guo and J.Wei, \emph{Qualitative properties of entire radial solutions for a biharmonic equation with supcritical nonlinearity}, Proc. American Math. Soc. (2010) to appear.





\bibitem{gidas}B. Gidas and J. Spruck, \emph{Global and local behavior of positive solutions of nonlinear elliptic equations.} Comm. Pure Appl. Math., 34(4):525-598, 1981.


\bibitem{Gidas}  B. Gidas, W. Ni and L. Nirenberg, \emph{Symmetry and related properties via the maximum principle}. Commun. Math. Phys. 68 (1979), 525–598. MR0544879 (80h:35043) 







\bibitem{Lin} C-S. Lin,  \emph{A classification of solutions of a conformally invariant fourth order equation in ${\bf R}^n$Rn.} Comment. Math. Helv. 73 (1998), no. 2, 206–231. 




\bibitem{miti} E. Mitidieri and S. I. Pokhozhaev, \emph{A priori estimates and the absence of solutions of nonlinear partial differential equations and inequalities.} Tr. Mat. Inst. Steklova, 234:1-384, 2001.


\bibitem{phan} Q.H. Phan and P.  Souplet, \emph{Liouville-type theorems and bounds of solutions of Hardy-H\'enon equations}.  Preprint (2011). 



\bibitem{souplet} P. Souplet,  \emph{The proof of the Lane-Emden conjecture in four space dimensions.} Adv. Math. 221 (2009), no. 5, 1409–1427.



\bibitem{Xu} X. Xu,  \emph{Uniqueness theorem for the entire positive solutions of biharmonic equations in Rn.} 
Proc. Roy. Soc. Edinburgh Sect. A 130 (2000), no. 3, 651–670.


\end{thebibliography}
 \end{document}